\newtheorem{theorem}{Theorem}
\newtheorem{definition}{Definition}
\newtheorem{proposition}{Proposition}
\newtheorem{remark}{Remark}
\newtheorem{example}{Example}
\newcommand{\tr}{{{\mathsf T}}}
\title{\LARGE \bf
Iterative Inner/outer Approximations for Scalable Semidefinite Programs using Block Factor-width-two Matrices
}
\author{Feng-Yi Liao and Yang Zheng$^\dagger$
\thanks{}
\thanks{
        $^\dagger$F. Liao and Y. Zheng are with ECE Department, University of California, San Diego. {Emails: fliao@ucsd.edu; zhengy@eng.ucsd.edu}}%
}
\begin{document}

\maketitle
\thispagestyle{empty}
\pagestyle{empty}

\begin{abstract}
In this paper, we propose iterative inner/outer approximations based on a recent notion of block factor-width-two matrices for solving semidefinite programs (SDPs). 
Our inner/outer approximating algorithms generate a sequence of upper/lower bounds of increasing accuracy for the optimal SDP cost. The block partition in our algorithms offers flexibility in terms of both numerical efficiency and solution quality, which includes the approach of scaled diagonally dominance (SDD) approximation as a special case. We discuss both the theoretical results and numerical implementation in detail. Our main theorems guarantee that the proposed iterative algorithms generate monotonically decreasing upper (increasing lower) bounds. 
Extensive numerical results confirm our findings.

\end{abstract}

\section{Introduction}
Semidefinite programs (SDPs) are a class of convex optimization problems over the positive semidefinite (PSD) cone. The standard primal and dual SDPs are in the form of 
\begin{subequations} \label{eq:SDPs}
    \begin{align}
        p^{\star} := \min_{X} \quad & \langle C, X\rangle \nonumber\\
        \mathrm{subject~to} \quad & \langle A_i, X\rangle = b_i, \quad i = 1, \ldots, m,   \label{eq:SDP-primal}\\
        & X \in \mathbb{S}^n_+, \nonumber
    \end{align}
     \begin{align}
        d^{\star} := \max_{y, Z} \quad & b^\tr y \nonumber \\
        \mathrm{subject~to} \quad & Z + \sum_{i=1}^m A_i y_i = C, \label{eq:SDP-dual} \\
        & Z \in \mathbb{S}^n_+, \nonumber
    \end{align}
\end{subequations}
where $b \in \mathbb{R}^m, C, A_1, \ldots, A_m \in \mathbb{S}^n$ are the problem data, $\mathbb{S}^n_+$ denotes the set of $n \times n$ PSD matrices (we also write $X \succeq 0$ to denote $X \in \mathbb{S}^n_+$ when the dimension is clear from the context), and $\langle \cdot,\cdot \rangle$ denotes the standard inner product in an approximate space. We assume the strong duality holds for the primal and dual SDPs \eqref{eq:SDPs}, i.e., $p^{\star} = d^{\star}$. 

Semidefinite optimization~\eqref{eq:SDP-primal} and \eqref{eq:SDP-dual}  is a powerful computational tool in control theory \cite{boyd1994linear}, combinatorial problems \cite{sotirov2012sdp}, non-convex polynomial optimization \cite{blekherman2012semidefinite}, and many other areas \cite{vandenberghe1996semidefinite}. While interior-point methods can solve SDPs in polynomial time to arbitrary accuracy in theory \cite{vandenberghe1996semidefinite}, they are not scalable to address many large-scale problems of practical interest \cite{zheng2021chordal,majumdar2020recent}. One main difficulty is due to the need of storage, computation, and factorization of a large matrix at each iteration of interior-point methods. Existing general-purpose SDP solvers (including the state-of-the-art solver MOSEK~\cite{aps2019mosek}) are limited to medium-scale problem instances (with $n$ less than 1000 and $m$ being a few hundreds in \eqref{eq:SDPs}). 

Overcoming the challenge of scalability has received much attention \cite{zheng2021chordal,majumdar2020recent}. One class of approaches is to develop efficient algorithms based on first-order methods. For instance, a general conic solver based on alternating direction method of multipliers (ADMM) was developed in~\cite{o2016conic}, and a sparse conic solver based on ADMM for SDPs with chordal sparsity was developed in \cite{zheng2020chordal,zheng2016cdcs}; see \cite[Section 3]{zheng2021chordal}~for~a recent overview. 
While first-order methods considerably speed up the computational time at each iteration,  achieving solutions of high accuracy  remains a central challenge and may require unacceptable many iterations. Therefore, first-order methods are mainly suitable for applications that only require solutions of moderate accuracy. 

Another class of approaches for efficiency improvement is to (equivalently or approximately) decompose a large PSD matrix into the sum of smaller PSD matrices that are easier to handle~\cite{zheng2021chordal}.
%
%
Specifically, one can try to decompose $X=\sum_{i=1}^{t} Q_i$, where $Q_i \succeq 0$ are nonzero
only on a certain (and, ideally, small) principal submatrix. When $X$ has~a special chordal sparsity pattern, such a decomposition is equivalent~\cite{zheng2021chordal,vandenberghe2015chordal,zheng2021sum}. In general, the decomposition above gives an inner approximation of the PSD cone $\mathbb{S}^n_+$. One widely used strategy is so-called \textit{scaled-diagonally dominant (SDD) matrices}~\cite{{boman2005factor}}, where each $Q_i$ only involves a $2 \times 2$ nonzero principal matrix that is equivalent to a second-order cone constraint. Second-order cone programs (SOCPs) admit much more efficient algorithms than SDPs. This scalability feature is one main motivation in the recent studies \cite{ahmadi2019dsos,bertsimas2020polyhedral,ahmadi2017optimization,wang2021polyhedral,ahmadi2017sum,roig2022globally}. In particular, this idea has been extensively used in the context of sum-of-squares optimization \cite{ahmadi2019dsos}. 
%
%
While the SDD approximation brings considerable computational efficiency in solving~\eqref{eq:SDPs}, the solution might be very conservative \cite{ahmadi2019dsos,zheng2021chordal}. Several iterative methods have been further proposed to improve solution quality, such as adding linear cuts or second-order cuts \cite{ahmadi2017optimization,wang2021polyhedral,bertsimas2020polyhedral}, and 
%
basis pursuit searching \cite{ahmadi2017sum,roig2022globally}. 
These methods \cite{ahmadi2017optimization,bertsimas2020polyhedral,wang2021polyhedral,ahmadi2017sum,roig2022globally} solve a linear program (LP) or a SOCP at each iteration, but may require many iterations to get a reasonable good solution  (if possible).

Recently, a new block extension of SDD matrices, called \textit{block factor-width-two} matrices, has been introduced in \cite{zheng2022block,sootla2019block}, where $Q_i \succeq 0$ involves a $2 \times 2$ block principle matrix. This notion works on block-partitioned matrices, and the block partition brings flexibility in terms of both solution quality and numerical efficiency in solving~\eqref{eq:SDPs}, as demonstrated extensively in \cite{zheng2022block,sootla2019block}. 
In this paper, we further develop iterative inner/outer approximations based on the new notion of \textit{block factor-width-two} matrices. Our iterative algorithms generalize the results in \cite{ahmadi2017sum} to the case of \textit{block factor-width-two} matrices and include \cite{ahmadi2017sum} as a special case (cf. Algorithms \ref{Algorithm:inner approximation}-\ref{Algorithm:outer approximation}). Our algorithms provide a sequence of upper and lower bounds of increasing accuracy on the optimal SDP cost $p^{\star}=d^{\star}$ (cf. Propositions \ref{prop:decreasing}-\ref{prop:increasing} and Theorems \ref{theorem:strictly decreasing}-\ref{theorem:strictly increasing}). Numerical results on independent stable set, binary quadratic optimization, and random SDPs confirm the performance of our iterative inner/outer approximations. 



The rest of this paper is organized as follows. In Section \ref{section:background}, we review the SDD and block SDD matrices. The iterative inner/outer approximations and their solution quality are presented in Section \ref{section: inner-outer-approximation} and Section \ref{sec:Outer}. 
In Section \ref{section:Numerical Results}, we present the numerical results. 
Section \ref{section:Conclution} concludes the paper. 


\section{Preliminaries and Problem Statement} \label{section:background}

In this section, we first give a brief overview of the existing approximation strategies for the PSD cone $\mathbb{S}^n_+$, including (scaled-) diagonally dominant matrices \cite{boman2005factor} and their block extensions \cite{zheng2022block}. These approximation strategies have shown promising computational efficiency improvement to solve \eqref{eq:SDP-primal}-\eqref{eq:SDP-dual}, but may also suffer from conservatism in solution quality \cite{zheng2022block,ahmadi2019dsos}. We then present the problem statement of improving the approximation quality via iterative algorithms.    

\subsection{DD and SDD matrices}

The class of (scaled-) diagonally dominant matrices is defined as follows \cite{boman2005factor}.  
\begin{definition}
    A symmetric matrix $A = [a_{ij}] \in \mathbb{S}^{n}$ is diagonally dominant (DD) if and only if
    \vspace{-1mm}
    \begin{equation} \label{eq:dd-matrices}
        a_{ii} \geq \sum_{j \neq i} |a_{ij}|,\  i= 1, 2,\ldots, n.
        \vspace{1mm}
    \end{equation}
\end{definition}

\begin{definition}
    A symmetric matrix $A$ is scaled diagonally dominant (SDD) if and only if there exists a diagonal matrix $D$ with positive entries such that $DAD$ is DD.
\end{definition}

We denote the set of of $n \times n$ DD matrices as $DD_n$ and the set of $n \times n$ SDD matrices  as $SDD_n$. It is known that the following inclusion holds (e.g., by Gershgorin's circle theorem) \cite{boman2005factor}
$$
    DD_n \subseteq SDD_n \subseteq \mathbb{S}^n_+.
$$

An SDD matrix $A$ has an equivalent characterization as a factor-width-two matrix, i.e., $A = VV^\tr$ where each column of $V$ contains at most two non-zero elements \cite{boman2005factor}. Furthermore, 
\begin{equation} \label{eq:sdd-socp}
    A \!\in\! SDD_n \Leftrightarrow A \!=\!\!\! \sum_{1 \leq i<j \leq n} E_{ij}^\tr M_{ij} E_{ij}, \text{with} \; M_{ij}\! \in\! \mathbb{S}^2_+,
\end{equation}
where $E_{ij} \in \mathbb{R}^{2 \times n}$ with $i$th entry in row 1 and $j$th entry in row 2 being 1 and other entries being zero. 

It is not difficult to see that \eqref{eq:dd-matrices} can be written as a set of linear constraints and \eqref{eq:sdd-socp} can be reformulated to a set of second-order constraints. Thus approximating $\mathbb{S}^n_+$ by $DD_n$ ($SDD_n$ respectively) in \eqref{eq:SDPs} becomes a linear program (second-order cone program, respectively), for which very efficient algorithms exist~\cite{alizadeh2003second}. This computational feature is one main motivation in the recent studies \cite{ahmadi2017sum,ahmadi2019dsos}.



\subsection{Block SDD matrices} \label{sec:BSDD}

The characterization in \eqref{eq:sdd-socp} only involves $2 \times 2$ PSD~matrices. A recent study has introduced a block extension to bridge the gap between $SDD_n$ and $\mathbb{S}^n_+$ \cite{zheng2022block}. The main idea is to allow \eqref{eq:sdd-socp} use $2 \times 2$ \textit{block} matrices. To introduce this block extension, we need to define \textit{block-partitioned matrices}.

Given a set of integers $\alpha=\{\alpha_1,\alpha_2,\ldots,\alpha_p\}$ with $\sum_{i=1}^p \alpha_i = n$, we say a matrix $A \in \mathbb{R}^{n \times n}$ is block-partitioned by $\alpha$ if we can write $A$  as 
\begin{equation}
    \begin{bmatrix}A_{11} & A_{12} & \ldots &A_{1p} \\ A_{21} & A_{22} & \ldots & A_{2p} \\ \vdots & \vdots &  \ddots & \vdots \\ A_{p1} & A_{p2} & \ldots & A_{pp} \end{bmatrix},
\end{equation}
where $A_{ij} \in \mathbb{R}^{\alpha_i\times \alpha_j}, \forall i,j = 1,2,\ldots,p$.  
%
Given a partition $\alpha =\{\alpha_{1}, \alpha_{2}, \cdots, \alpha_{p}\}$, we define a 0/1 index matrix $E_{i}^{\alpha}$ as 
\begin{equation}
    E_{i}^{\alpha} = \begin{bmatrix} 0 & 0 & \ldots & I_{\alpha_i} & \ldots & 0 \end{bmatrix} \in \mathbb{R}^{\alpha_i \times n}, 
\end{equation}
and another matrix 
$$
    E_{ij}^{\alpha} = \begin{bmatrix}
    E_{i}^{\alpha} \\E_{j}^{\alpha}
    \end{bmatrix} \in  \mathbb{R}^{(\alpha_i+\alpha_j) \times n}, \quad i \neq j. 
$$
It is clear that $E_{ij}$ in \eqref{eq:sdd-socp} is the same as $E_{ij}^{\alpha}$ when $\alpha = \{1,1, \ldots, 1\}$ (i.e., the trivial partition). Now, we are ready to introduce the notion of block factor-width-two matrices. 

\begin{definition}[{\!\cite{zheng2022block}}] \label{def:alpha_block_factor_width2}
    A symmetric matrix $A$ with partition $\alpha =\{\alpha_{1}, \alpha_{2}, \cdots, \alpha_{p}\}$ belongs to \textit{block factor-width-two} matrices, denoted as $\mathcal{FW}^n_{\alpha,2}$, if there exist  $X_{ij}$ such that
    \begin{equation} \label{eq:factor-width-two}
        A = \sum_{1 \leq i<j \leq p}^{p}
        (E_{ij}^{\alpha})^\tr X_{ij}E_{ij}^{\alpha}, \quad \text{with} \; \; X_{ij} \in \mathbb{S}^{\alpha_i + \alpha_j}_{+}.
    \end{equation}
    \vspace{1mm}
\end{definition}

We note that a matrix can be partitioned in different ways. This flexibility in block factor-width-two matrices can be used to build a converging hierarchy of approximations for $\mathbb{S}^n_+$~\cite[Theorem 2]{zheng2022block}.  For example, three possible partitions for a $10 \times 10$ matrix are  $ \alpha=\{1,1,\ldots,1\}$, $\beta=\{2,2,2,2,2\}$, $\gamma=\{4,4,2\}$, for which we have that  \cite{zheng2022block} 
$$
\label{block_hierarchy}SDD_{10} = \mathcal{FW}^{10}_{\alpha,2} \subseteq \mathcal{FW}^{10}_{\beta,2} \subseteq \mathcal{FW}^{10}_{\gamma,2}.
$$ 
This inclusion relation is illustrated in Figure \ref{fig: feasible-region-different-partitions}, which~shows the feasible set of $x$ and $y$ for which the $10 \times 10$ symmetric matrix $I_{10}+xA+yB$ ($A$ and $B$ are two random generated $10 \times 10$ symmetric matrices) belongs to PSD,  $\mathcal{FW}^{10}_{\alpha,2}$, $\mathcal{FW}^{10}_{\beta,2}$, and $\mathcal{FW}^{10}_{\gamma,2}$. 

In particular, we say a partition $\alpha$ is a \textit{finer} partition of $\beta$, denoted as $ \alpha \sqsubseteq \beta$, if $\alpha$ can be formed by breaking some blocks in $\beta$ (or equivalently, $\beta$ can be formed by merging some blocks in $\alpha$); see \cite[Definition 1]{zheng2022block}. We have the following theorem.
\begin{theorem}[{\!\!\cite[Theorem 2]{zheng2022block}}] \label{theo:block-factor-width-two}
Given $\{1,1,\ldots,1\} \sqsubseteq \alpha \sqsubseteq \beta \sqsubseteq \gamma = \{\gamma_1,\gamma_2\} $ with $\gamma_1 + \gamma_2 = n$, we have a converging hierarchy of inner and outer approximations
    \begin{equation}
    \begin{aligned}
 &SDD_n \subseteq \mathcal{FW}^{n}_{\alpha,2} \subseteq \mathcal{FW}^{n}_{\beta,2}  \subseteq \mathcal{FW}^{n}_{\gamma,2} = \mathbb{S}^{n}_{+} \\
    &= (\mathcal{FW}^{n}_{\gamma,2})^{*} \!\subseteq \!(\mathcal{FW}^{n}_{\beta,2})^{*} \!\subseteq \!(\mathcal{FW}^{n}_{\alpha,2})^{*} \subseteq (SDD_n)^*,
    \end{aligned}
\end{equation}
where $(\mathcal{FW}^{n}_{\alpha,2})^{*}$ denotes the dual of $\mathcal{FW}^{n}_{\alpha,2}$. 
\end{theorem}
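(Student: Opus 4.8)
The plan is to reduce the whole statement to one monotonicity lemma for finer/coarser partitions and then obtain the remainder from two elementary cone facts. The core claim I would isolate is: \emph{if $\alpha \sqsubseteq \beta$ and $\beta$ has at least two blocks, then $\mathcal{FW}^n_{\alpha,2} \subseteq \mathcal{FW}^n_{\beta,2}$.} To prove it, take $A \in \mathcal{FW}^n_{\alpha,2}$ with a certificate $A = \sum_{1 \le i < j \le p}(E_{ij}^\alpha)^\tr X_{ij} E_{ij}^\alpha$, $X_{ij} \succeq 0$. Because $\mathcal{FW}^n_{\beta,2}$ is a convex cone (certificates concatenate under addition), it suffices to show that each single summand $(E_{ij}^\alpha)^\tr X_{ij} E_{ij}^\alpha$ already belongs to $\mathcal{FW}^n_{\beta,2}$.

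I would then split on how the $\alpha$-blocks $i,j$ sit inside $\beta$. If they fall into two distinct $\beta$-blocks $k \ne l$, then the index sets of $\alpha$-blocks $i,j$ are contained in those of $\beta$-blocks $k,l$, so there is a $0/1$ selection matrix $R \in \mathbb{R}^{(\alpha_i+\alpha_j)\times(\beta_k+\beta_l)}$ with $E_{ij}^\alpha = R\,E_{kl}^\beta$. Setting $Y_{kl} = R^\tr X_{ij} R \succeq 0$ gives $(E_{ij}^\alpha)^\tr X_{ij} E_{ij}^\alpha = (E_{kl}^\beta)^\tr Y_{kl} E_{kl}^\beta$, a valid $\beta$-term. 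If instead $i,j$ merge into a single $\beta$-block $k$, the summand is a PSD matrix supported on block $k$, say $(E_k^\beta)^\tr S E_k^\beta$ with $S \succeq 0$ its principal submatrix. Distributing it across all pairs $(k,l)$, $l \ne k$, via
\begin{equation*}
    Y_{kl} = \begin{bmatrix} \tfrac{1}{p_\beta - 1}\,S & 0 \\ 0 & 0 \end{bmatrix} \succeq 0,
\end{equation*}
yields $\sum_{l\ne k}(E_{kl}^\beta)^\tr Y_{kl} E_{kl}^\beta = (E_k^\beta)^\tr S E_k^\beta$, again a valid $\beta$-certificate; this is exactly the place where $p_\beta \ge 2$ is needed.

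With the lemma established, the inner chain follows by applying it three times along $\{1,\dots,1\} \sqsubseteq \alpha \sqsubseteq \beta \sqsubseteq \gamma$, using the identity $SDD_n = \mathcal{FW}^n_{\{1,\dots,1\},2}$ recorded in \eqref{eq:sdd-socp}; every partition in this chain has at least two blocks since $\gamma$ has two. For the terminal equality I would observe that a two-block $\gamma = \{\gamma_1,\gamma_2\}$ admits the single pair $(1,2)$ with $E_{12}^\gamma = I_n$, so Definition \ref{def:alpha_block_factor_width2} collapses to $A = X_{12} \succeq 0$, i.e. $\mathcal{FW}^n_{\gamma,2} = \mathbb{S}^n_+$.

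The dual half is then purely formal. The self-duality $\mathbb{S}^n_+ = (\mathbb{S}^n_+)^*$ under the trace inner product is classical and gives the middle identity $\mathcal{FW}^n_{\gamma,2} = (\mathcal{FW}^n_{\gamma,2})^*$. Since dualization reverses inclusions for closed convex cones ($K_1 \subseteq K_2 \Rightarrow K_2^* \subseteq K_1^*$), applying this to the already-proved inner chain produces the outer chain verbatim. I expect the main obstacle to be the same-block case of the monotonicity lemma, where one must exhibit a concrete PSD redistribution of a single diagonal block across the pair structure; everything downstream reduces to order-reversal of duals and self-duality of the PSD cone.
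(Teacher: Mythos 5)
Your proposal is essentially correct, but note first that this paper contains no proof of the statement at all: Theorem~\ref{theo:block-factor-width-two} is imported verbatim from \cite[Theorem 2]{zheng2022block}, so there is no internal argument to compare against, and what you have written is a genuine self-contained proof of the cited result. Your reduction is sound: the convex-cone/summand reduction, the two-case split according to whether the $\alpha$-blocks $i,j$ land in distinct $\beta$-blocks or in a common one, the selection-matrix argument $E_{ij}^\alpha = R\,E_{kl}^\beta$ with $Y_{kl}=R^\tr X_{ij}R\succeq 0$ in the first case, and the uniform redistribution $S/(p_\beta-1)$ over the $p_\beta-1$ pairs containing block $k$ in the second case all work, and you correctly isolated the same-block case as the only step with real content (it is exactly where $SDD_n=\mathcal{FW}^n_{\{1,\ldots,1\},2}\subseteq\mathcal{FW}^n_{\beta,2}$ would otherwise fail). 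The collapse $\mathcal{FW}^n_{\gamma,2}=\mathbb{S}^n_+$ via $E_{12}^\gamma=I_n$, and the purely formal dual half (self-duality of $\mathbb{S}^n_+$ plus order reversal $K_1\subseteq K_2\Rightarrow K_2^*\subseteq K_1^*$, which in fact needs no closedness hypothesis) are both right. One bookkeeping slip: your displayed $Y_{kl}$ places $\tfrac{1}{p_\beta-1}S$ in the top-left corner, which is only correct for pairs with $k<l$; for pairs $(l,k)$ with $l<k$ the convention $E_{lk}^\beta = \bigl[\begin{smallmatrix} E_l^\beta \\ E_k^\beta \end{smallmatrix}\bigr]$ puts block $k$ second, so $S$ must sit in the bottom-right corner there. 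This is cosmetic and does not affect the validity of the argument.
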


\begin{figure} 
    \centering
    \includegraphics[width=0.3\textwidth]{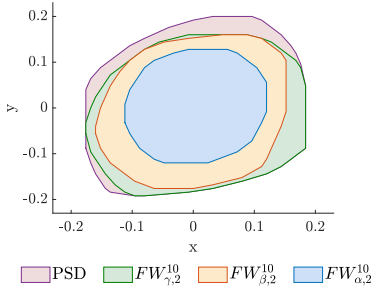}
        \caption{Feasible region of the set of $x$ and $y$ for which the $10 \times 10$ $I_{10}+xA+yB$ belongs to PSD,  $\mathcal{FW}^{10}_{\alpha,2}$, $\mathcal{FW}^{10}_{\beta,2}$, and $\mathcal{FW}^{10}_{\gamma,2}$, where $\alpha=\{1,1,\ldots,1\}$, $\beta=\{2,2,2,2,2\}$, $\gamma=\{4,4,2\}$.}
    \label{fig: feasible-region-different-partitions}
\end{figure}

\subsection{Problem statement}

In \cite{ahmadi2017sum}, we have seen significant numerical efficiency improvements by approximating $\mathbb{S}^n_+$ using $DD_n$ and $SDD_n$ for solving the SDP \eqref{eq:SDPs}, but the solution quality can be unsatisfactory. As shown in Theorem \ref{theo:block-factor-width-two}, the block factor-width-two matrices can improve the solution quality by using a coarser partition $\beta$ \cite{zheng2022block}. This leads to larger PSD constraints shown in \eqref{eq:factor-width-two}, potentially compromising the numerical efficiency.    

In this work, we aim to develop iterative inner and outer approximations for solving the SDP \eqref{eq:SDPs} and at each iteration the partition $\alpha$ is fixed. In this way, we solve the SDP \eqref{eq:SDPs} by  solving  smaller SDPs iteratively and maintaining the scalability at each iteration. In particular, we will combine the basis pursuit idea in \cite{ahmadi2017sum} and the tight approximation quality of block factor-width-two matrices in \cite{zheng2022block}.


\section{Inner approximations of the psd cone} \label{section: inner-outer-approximation}
Given a partition $\alpha$, we know $\mathcal{FW}^{n}_{\alpha,2} \subseteq \mathbb{S}^n_+ \subseteq (\mathcal{FW}^{n}_{\alpha,2})^*$. Then, replacing $\mathbb{S}^n_+$ with $\mathcal{FW}^{n}_{\alpha,2}$ (or $(\mathcal{FW}^{n}_{\alpha,2})^*$, respectively) in \eqref{eq:SDPs} naturally gives an inner (outer, respectively) approximation for solving SDPs~\cite{zheng2022block}. In this section, motivated by the basis  pursuit idea in \cite{ahmadi2017sum}, we introduce an iterative algorithm  for inner approximations of \eqref{eq:SDPs}. Our algorithm returns a sequence of upper bounds with increasing accuracy.    



\subsection{Inner Approximations} \label{sec:inner}
For the inner approximation, we start from replacing the PSD constraint in \eqref{eq:SDP-primal} by $\mathcal{FW}^{n}_{\alpha,2}$, leading to  
\begin{subequations}  \label{eq:inner-approximation-step-0}
    \begin{align}
      {\mathsf U}_{\alpha}^1 :=  \min_{X} \quad & \langle C, X\rangle \nonumber \\
        \mathrm{subject~to} \quad & \langle A_i, X\rangle = b_i, \quad i = 1, \ldots, m,  \label{eq:inner-approximation-step-0-con1}\\
        & X \in \mathcal{FW}^{n}_{\alpha,2}, \label{eq:inner-approximation-step-0-con2}
    \end{align}
\end{subequations}
which provides an upper bound for \eqref{eq:SDP-primal}. 
The cyclic property of the trace operator leads to
$$
\begin{aligned}
\langle C, X\rangle &= \left\langle C, \sum_{1 \leq k<l \leq p}
        (E_{kl}^{\alpha})^\tr X_{kl}E_{kl}^{\alpha}\right\rangle \\
        &= \sum_{1 \leq k<l \leq p} \left\langle E_{kl}^{\alpha} C(E_{kl}^{\alpha})^\tr, 
         X_{kl}\right\rangle. 
\end{aligned}
$$
This allows us to equivalently rewrite~\eqref{eq:inner-approximation-step-0} into
\begin{equation} \label{eq:inner-approximation-step-1}
    \begin{aligned}
        {\mathsf U}_{\alpha}^1 := \min_{X_{kl}} \quad & \sum_{1 \leq k<l \leq p} \left\langle C_{kl}, 
         X_{kl}\right\rangle \\
        \mathrm{subject~to} \quad & \sum_{1 \leq k<l \leq p} \left\langle A_{i,kl}, 
         X_{kl}\right\rangle = b_i, i = 1, \ldots, m,  \\
        & \;\; X_{kl} \in \mathbb{S}^{\alpha_k +\alpha_l}_+, \qquad \quad   1 \leq k < l \leq p,
    \end{aligned}
\end{equation}
where $C_{kl} = E_{kl}^{\alpha} C(E_{kl}^{\alpha})^\tr, A_{i,kl} = E_{kl}^{\alpha} A_i(E_{kl}^{\alpha})^\tr, 1 \leq k < l \leq p,i = 1, \ldots, m$. We can now use standard conic solvers (such as SeDuMi \cite{sturm1999using} and MOSEK \cite{aps2019mosek}) to solve \eqref{eq:inner-approximation-step-1}. This gives an upper bound 
\begin{equation}
    d^{\star} =  p^{\star} \leq {\mathsf U}_{\alpha}^1. 
\end{equation}
The gap ${\mathsf U}_{\alpha}^1 - p^{\star} $ may be large. By Theorem \ref{theo:block-factor-width-two}, using a coarser partition $ \alpha \sqsubseteq \beta$ can reduce the gap ${\mathsf U}_{\beta}^1 - p^{\star} \leq  {\mathsf U}_{\alpha}^1 - p^{\star} $, but this leads to an SDP with a larger PSD constraint in \eqref{eq:inner-approximation-step-1}.  

We introduce another way to reduce the gap by solving a sequence of SDPs in the form of \eqref{eq:inner-approximation-step-1} while keeping the same partition $\alpha$. In particular, given an $n \times n$ matrix $V$, we define a family of cones
\begin{equation}\label{eq:family-of-cone}
    \begin{aligned}
       \mathcal{FW}^n_{\alpha,2}(V) \!\coloneqq \! \{M\in \mathbb{S}^{n} \mid M \!=\!V^\tr QV,\, Q \!\in\! \mathcal{FW}^n_{\alpha,2} \}. 
    \end{aligned}
\end{equation}
 It is clear that $\mathcal{FW}^n_{\alpha,2}(V) = \mathcal{FW}^n_{\alpha,2}$ when $V = I$, and that $ \mathcal{FW}^n_{\alpha,2}(V)$ is an inner approximation of $\mathbb{S}^n_+$ for any $V$.

When $V$ is fixed, linear optimization over $\mathcal{FW}^n_{\alpha,2}(V)$ amounts to solve an SDP in a similar form to \eqref{eq:inner-approximation-step-1}.
In particular, at each iteration $t$, we replace $\mathcal{FW}^n_{\alpha,2}$ in \eqref{eq:inner-approximation-step-0} with $\mathcal{FW}^n_{\alpha,2}(V_t)$, and get the following problem
\begin{equation} \label{eq:inner-approximation-step-k}
    \begin{aligned}
        {\mathsf U}_{\alpha}^t := \min_{X_{kl}} ~ & \sum_{1 \leq k<l \leq p} \left\langle \hat{C}_{kl}, 
         X_{kl}\right\rangle \\
        \mathrm{subject~to} ~& \sum_{1 \leq k<l \leq p} \left\langle \hat{A}_{i,kl}, 
         X_{kl}\right\rangle = b_i, i = 1, \ldots, m  \\
        & \;\; X_{kl} \in \mathbb{S}^{\alpha_k +\alpha_l}_+, \qquad    1 \leq k < l \leq p,
    \end{aligned}
\end{equation}
where the problem data are
\begin{equation}\label{eq:update}
  \begin{aligned}
\hat{C}_{kl} &= E_{kl}^{\alpha}\left(V_t CV_t^\tr\right)(E_{kl}^{\alpha})^\tr, \\
\hat{A}_{i,kl}& = E_{kl}^{\alpha} \left(V_t A_iV_t^\tr\right)(E_{kl}^{\alpha})^\tr,\quad 1 \leq k < l \leq p.
\end{aligned}  
\end{equation}

%
%
%
%
We choose the sequence of matrices $\{V_t\}$ as 
\begin{equation} \label{eq:matrices_U}
    \begin{split}
         V_1 & = I\\
         V_{t+1} & = \mathrm{chol}(X_{t}^{\star}),  
    \end{split}
\end{equation}
where $\mathrm{chol}(\cdot)$ denotes a Cholesky factorization, and $X_{t}^{\star} := \sum_{1 \leq k<l \leq p}
        V_{t}^\tr(E_{kl}^{\alpha})^\tr X_{kl}^{t,\star}E_{kl}^{\alpha}V_{t}$ is the optimal solution to \eqref{eq:inner-approximation-step-k} at iteration $t$\footnote{We assume that the first iteration is feasible. This guarantees the feasibility of the rest of iterations.}. When choosing $V_1 = I$ at iteration 1, problem \eqref{eq:inner-approximation-step-k} reduces to \eqref{eq:inner-approximation-step-1}. 

\subsection{Monotonically decreasing upper bounds}
The choice of the matrices $V_{t+1}$ as the factorization of $X_{t}^{\star}$ in \eqref{eq:matrices_U} leads to a sequence of monotonically decreasing cost values in \eqref{eq:inner-approximation-step-k}. We have the following proposition.

\begin{proposition}\label{prop:decreasing}
    Given any partition $\alpha$, solving \eqref{eq:inner-approximation-step-k} with matrices $\{V_{t}\}$ in \eqref{eq:matrices_U} leads to 
    \vspace{-1.5mm}
    $${\mathsf U}^{1}_\alpha \geq {\mathsf U}^{2}_\alpha \geq \ldots \geq {\mathsf U}^{t}_\alpha \geq {\mathsf U}^{t+1}_\alpha \geq p^\star.$$
\end{proposition}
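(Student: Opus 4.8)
The plan is to reformulate each subproblem \eqref{eq:inner-approximation-step-k} as a linear optimization over the cone $\mathcal{FW}^n_{\alpha,2}(V_t)$ in the original variable $X$, and then exhibit the previous optimizer as a feasible point for the next iteration. First I would observe that the data updates in \eqref{eq:update} amount to a congruence by $V_t$: collecting the block variables into $Q=\sum_{1\leq k<l\leq p}(E_{kl}^{\alpha})^\tr X_{kl}E_{kl}^{\alpha}\in\mathcal{FW}^n_{\alpha,2}$ and setting $X=V_t^\tr Q V_t$, the cyclic property of the trace gives $\sum_{k<l}\langle \hat{C}_{kl}, X_{kl}\rangle=\langle C, X\rangle$ and likewise $\langle A_i, X\rangle=b_i$. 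Hence \eqref{eq:inner-approximation-step-k} is equivalent to minimizing $\langle C,X\rangle$ subject to $\langle A_i,X\rangle=b_i$ and $X\in\mathcal{FW}^n_{\alpha,2}(V_t)$, and its lifted optimizer is precisely $X_t^{\star}=V_t^\tr Q^{\star} V_t$ as defined in \eqref{eq:matrices_U}.

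The lower bound is immediate: since $\mathcal{FW}^n_{\alpha,2}(V_t)\subseteq\mathbb{S}^n_+$ for every $V_t$ (each such cone is an inner approximation of the PSD cone), any point feasible for \eqref{eq:inner-approximation-step-k} is feasible for the primal \eqref{eq:SDP-primal}. Minimizing over the smaller feasible set can only increase the optimal value, which yields $\mathsf{U}^{t}_\alpha\geq p^{\star}$ for all $t$.

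For the monotonicity, the key step is to show that $X_t^{\star}$ remains feasible at iteration $t+1$. The linear constraints $\langle A_i, X_t^{\star}\rangle=b_i$ already hold, so it suffices to prove $X_t^{\star}\in\mathcal{FW}^n_{\alpha,2}(V_{t+1})$. Because $V_{t+1}=\mathrm{chol}(X_t^{\star})$ satisfies $X_t^{\star}=V_{t+1}^\tr V_{t+1}$, I would simply write $X_t^{\star}=V_{t+1}^\tr I\, V_{t+1}$ and note that $I\in SDD_n\subseteq\mathcal{FW}^n_{\alpha,2}$ (the identity is diagonally dominant, and $SDD_n=\mathcal{FW}^{n}_{\{1,\ldots,1\},2}\subseteq\mathcal{FW}^{n}_{\alpha,2}$ by Theorem \ref{theo:block-factor-width-two}). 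Thus $X_t^{\star}$ is a feasible point of the reformulated subproblem at iteration $t+1$, whence $\mathsf{U}^{t+1}_\alpha\leq\langle C,X_t^{\star}\rangle=\mathsf{U}^{t}_\alpha$. Chaining this inequality over $t$ produces the claimed monotone sequence.

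The only real subtlety—and the step I would check most carefully—is the membership $I\in\mathcal{FW}^n_{\alpha,2}$ together with the Cholesky convention $X_t^{\star}=V_{t+1}^\tr V_{t+1}$; once these are pinned down, the argument is essentially a warm-start feasibility observation requiring no delicate estimates. A minor point worth recording is the footnote's standing assumption that iteration $1$ is feasible, since the identity certificate above is exactly what propagates feasibility to every subsequent iteration.
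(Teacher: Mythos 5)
Your proposal is correct and follows essentially the same argument as the paper: the Cholesky factorization gives $X_t^{\star} = V_{t+1}^\tr I\, V_{t+1}$ with $I \in \mathcal{FW}^n_{\alpha,2}$, so the previous optimizer is feasible at iteration $t+1$, yielding ${\mathsf U}^{t}_\alpha \geq {\mathsf U}^{t+1}_\alpha$. Your write-up is somewhat more complete than the paper's (it spells out the congruence reformulation, the inner-approximation lower bound ${\mathsf U}^{t}_\alpha \geq p^\star$, and the justification $I \in SDD_n \subseteq \mathcal{FW}^n_{\alpha,2}$, all of which the paper leaves implicit), but the key idea is identical.
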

\begin{proof}
Upon choosing $V_{t+1} = \mathrm{chol}(X_t^{\star})$, we naturally have $X_{t} = V_{t+1}^\tr \times I \times V_{t+1}$. Since $I \in \mathcal{FW}^n_{\alpha,2}$, we have $X_{t}^{\star} \in \mathcal{FW}^n_{\alpha,2}(V_{t+1})$. It means that the optimal solution $X_{t}^{\star}$ at iteration $t$ is in the feasible region of the SDP at iteration $t+1$. Thus, we have ${\mathsf U}^{t}_\alpha \geq {\mathsf U}^{t+1}_\alpha$. 
\end{proof}

\begin{algorithm}[t]
\SetKwData{Left}{left}\SetKwData{This}{this}\SetKwData{Up}{up}
\SetKwFunction{Union}{Union}\SetKwFunction{FindCompress}{FindCompress}

\caption{Inner-approximations using~$\mathcal{FW}^n_{\alpha,2}$}
\KwIn{SDP data $A_i,C \in \mathbb{S}^n, b\in \mathbb{R}^m$, block partition $\alpha$, and maximum iteration $t_{\max}$}
\KwOut{Upper bound ${\mathsf U}_{\alpha}$}
Initialize $t=1;$ $V_{1}= I;$ \\
    \While{$t<t_{\max}$ }
    {
    Solve \eqref{eq:inner-approximation-step-k} to get ${\mathsf U}_{\alpha}^t$ and $X_{kl}^{t,\star}, 1 \leq k<l\leq p;$ \\
    Set ${\mathsf U}_{\alpha} = {\mathsf U}_{\alpha}^t;$\\
    Compute
    $$ 
    \begin{aligned}
    & X_t^{\star} = \sum_{1 \leq k<l \leq p}
        V_{t}^\tr(E_{kl}^{\alpha})^\tr X_{kl}^{t,\star}E_{kl}^{\alpha}V_{t};
        \\ & V_{t+1} = \mathrm{chol}(X_t^{\star});
    \end{aligned}
    $$ \\
    Update $\hat{C}_{kl},\hat{A}_{i,kl}$ as \eqref{eq:update}$;$
    Set $t=t+1;$ 
    }
    \Return {${\mathsf U}_{\alpha}$ }
\label{Algorithm:inner approximation}
\end{algorithm}

When $X_t^{\star}$ is positive definite, we have a strictly decreasing cost value, as summarized in the following theorem.

\begin{theorem} \label{theorem:strictly decreasing}
    Given any partition $\alpha$, let $X_t^{\star}$ be an optimal solution of \eqref{eq:inner-approximation-step-k} at iterate $t$. If $X_t^{\star}$ is positive definite and $ {\mathsf U}^{t}_{\alpha} > p^\star$, then ${\mathsf U}^{t}_{\alpha} > {\mathsf U}^{t+1}_{\alpha} \geq p^\star $.
\end{theorem}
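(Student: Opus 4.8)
The plan is to build on the weak inequality ${\mathsf U}^{t+1}_\alpha \le {\mathsf U}^t_\alpha$ already established in Proposition~\ref{prop:decreasing} and upgrade it to a strict one via an interior-point argument. First I would record two consequences of the positive definiteness hypothesis. Since $X_t^{\star}\succ 0$, its Cholesky factor $V_{t+1}=\mathrm{chol}(X_t^{\star})$ is invertible and $X_t^{\star}=V_{t+1}^\tr I V_{t+1}$, so the congruence $Q\mapsto V_{t+1}^\tr Q V_{t+1}$ is a linear bijection of $\mathbb{S}^n$. Moreover, the identity $I$ is strictly diagonally dominant and therefore lies in the interior of $SDD_n\subseteq \mathcal{FW}^n_{\alpha,2}$, i.e. $I\in\mathrm{int}\,\mathcal{FW}^n_{\alpha,2}$. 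Because an invertible linear map carries interior points to interior points, it follows from the definition \eqref{eq:family-of-cone} that $X_t^{\star}=V_{t+1}^\tr I V_{t+1}$ lies in the interior of $\mathcal{FW}^n_{\alpha,2}(V_{t+1})$, the cone used at iteration $t+1$.

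Next I would argue by contradiction. Suppose ${\mathsf U}^{t+1}_\alpha = {\mathsf U}^t_\alpha$. Since $X_t^{\star}$ is feasible for the problem at iteration $t+1$ (as in Proposition~\ref{prop:decreasing}) and attains the value ${\mathsf U}^t_\alpha$, it is then an optimal solution at iteration $t+1$. The feasible set at iteration $t+1$ is the intersection of the cone $\mathcal{FW}^n_{\alpha,2}(V_{t+1})$ with the affine subspace $\mathcal{A}=\{X\in\mathbb{S}^n:\langle A_i,X\rangle=b_i,\ i=1,\dots,m\}$. Because $X_t^{\star}$ is an interior point of the cone, every direction $D$ in the linear space $\{D:\langle A_i,D\rangle=0,\ \forall i\}$ keeps $X_t^{\star}\pm\epsilon D$ feasible for all sufficiently small $\epsilon>0$. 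Optimality forces $\langle C,D\rangle\ge 0$ for each such $D$, and applying this to both $\pm D$ yields $\langle C,D\rangle=0$ for all $D$ orthogonal to $\mathrm{span}\{A_1,\dots,A_m\}$; hence $C=\sum_{i=1}^m\lambda_i A_i$ for some multipliers $\lambda_i$.

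Finally I would derive the contradiction. With $C=\sum_i\lambda_i A_i$, the objective is constant on $\mathcal{A}$: for every $X\in\mathcal{A}$ one has $\langle C,X\rangle=\sum_i\lambda_i b_i$. Since $X_t^{\star}\succ 0$ is in particular PSD and satisfies the equality constraints, it is feasible for the original SDP \eqref{eq:SDP-primal}; as the feasible set of \eqref{eq:SDP-primal} is a subset of $\mathcal{A}$ on which the objective equals this same constant, we get $p^\star=\sum_i\lambda_i b_i=\langle C,X_t^{\star}\rangle={\mathsf U}^t_\alpha$, contradicting the hypothesis ${\mathsf U}^t_\alpha>p^\star$. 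Therefore ${\mathsf U}^{t+1}_\alpha<{\mathsf U}^t_\alpha$, and combining with ${\mathsf U}^{t+1}_\alpha\ge p^\star$ from Proposition~\ref{prop:decreasing} gives ${\mathsf U}^t_\alpha>{\mathsf U}^{t+1}_\alpha\ge p^\star$. The step I expect to require the most care is verifying that $X_t^{\star}$ is genuinely interior to the transformed cone $\mathcal{FW}^n_{\alpha,2}(V_{t+1})$ — this is precisely where both hypotheses enter (positive definiteness makes $V_{t+1}$ invertible, and $I\in\mathrm{int}\,\mathcal{FW}^n_{\alpha,2}$) — together with the accompanying claim that interiority permits perturbations in both $\pm D$ directions while remaining feasible.
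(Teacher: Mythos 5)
Your proof is correct, but it reaches strictness by a genuinely different mechanism than the paper. The paper argues directly and constructively: it takes the optimal solution $X^{\star}$ of \eqref{eq:SDP-primal}, forms $\hat{X}=(1-\lambda)X_t^{\star}+\lambda X^{\star}$, and shows that for small enough $\lambda\in(0,1)$ this point is feasible for \eqref{eq:inner-approximation-step-k} at iteration $t+1$ --- the conic part being precisely your interiority observation, phrased there as $(1-\lambda)I+\lambda\tilde{X}_{t+1}\in\mathcal{FW}^n_{\alpha,2}$ with $\tilde{X}_{t+1}=(V_{t+1}^{-1})^{\tr}X^{\star}V_{t+1}^{-1}$ --- so that ${\mathsf U}^{t+1}_{\alpha}\leq(1-\lambda){\mathsf U}^{t}_{\alpha}+\lambda p^{\star}<{\mathsf U}^{t}_{\alpha}$. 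You instead argue by contradiction through first-order optimality: if ${\mathsf U}^{t+1}_{\alpha}={\mathsf U}^{t}_{\alpha}$, then $X_t^{\star}$ is optimal at iteration $t+1$, and since it is interior to $\mathcal{FW}^n_{\alpha,2}(V_{t+1})$, two-sided perturbations along the constraint null space force $C\in\mathrm{span}\{A_1,\ldots,A_m\}$, making the objective constant on the affine subspace and hence ${\mathsf U}^{t}_{\alpha}=p^{\star}$, contradicting the hypothesis. Both proofs hinge on the same key fact --- positive definiteness of $X_t^{\star}$ plus $I\in\mathrm{int}\,\mathcal{FW}^n_{\alpha,2}$ makes $X_t^{\star}$ interior to the transformed cone --- but the trade-offs differ: the paper's argument exhibits an explicit strictly better feasible point, yet it silently requires that the primal optimum of \eqref{eq:SDP-primal} be attained by some $X^{\star}$, whereas your argument uses only the value $p^{\star}$ and needs no attainment; in exchange, yours is non-constructive. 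A side benefit of your route is that it isolates exactly when strict decrease can fail (only when $C$ lies in the span of the $A_i$, in which case every point of the affine subspace is already optimal), and it supplies the justification ($I$ strictly diagonally dominant; congruence by an invertible $V_{t+1}$ is a linear homeomorphism) for the interiority step that the paper invokes implicitly when asserting $(1-\lambda)I+\lambda\tilde{X}_{t+1}\in\mathcal{FW}^{n}_{\alpha,2}$ for small $\lambda$.
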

\begin{proof}
Let $X^{\star}$ and $p^{\star}$ be the optimal solution and cost value of \eqref{eq:SDPs}. We construct a point 
\begin{equation} \label{eq:Xhat}
  \hat{X}\coloneqq (1-\lambda)X_t^{\star}+\lambda X^{\star}, 
\end{equation}
with some $\lambda \in (0,1)$. We will prove there exists a $\lambda \in (0,1)$ such that $ \hat{X}$ in \eqref{eq:Xhat} is feasible for \eqref{eq:inner-approximation-step-k} at iteration $t + 1$. Therefore, we complete the proof by observing  
$$
U^{t+1}_{\alpha} \leq \langle C, \hat{X}\rangle = (1-\lambda)\langle C, X_t^{\star} \rangle+\lambda\langle C,X^{\star} \rangle < U^{t}_{\alpha}.
$$

To prove $\hat{X}$ is feasible at iteration $t+1$ for some $\lambda \in (0,1)$, we need to show $\hat{X}$ satisfies 
\begin{itemize}
    \item the linear constraint \eqref{eq:inner-approximation-step-0-con1}, 
    \item the conic constraint \eqref{eq:inner-approximation-step-0-con2} with $\mathcal{FW}^n_{\alpha,2}(V_{t+1})$. 
\end{itemize}
First, it is clear that  both $X_t^{\star}$ and $X^{\star}$ satisfy \eqref{eq:inner-approximation-step-0-con1}, i.e., 
$$
\begin{aligned}
    \langle A_i, X_t^{\star} \rangle &= b_i,\;\; i = 1, \ldots, m,\\
    \langle A_i, X^{\star} \rangle &= b_i,\;\; i = 1, \ldots, m.
\end{aligned}
$$
Then, we have $\forall i=1, \ldots, m$,
$$
\begin{aligned}
\left\langle A_i, \hat{X} \right\rangle &= \langle A_i, (1-\lambda)X_t^{\star}+\lambda X^{\star}\rangle \\
&= (1-\lambda) \langle A_i,X_t^{\star} \rangle +\lambda \langle A_i, X^{\star} \rangle = b_i.
\end{aligned}
$$ 
Since $X_t^{\star}=V_{t+1}^\tr V_{t+1}$ and $X_t^{\star}$ is positive definite, $V_{t+1}$ must be invertible. We let 
$$
\tilde{X}_{t+1}  \coloneqq \left(V_{t+1}^{-1}\right)^{\tr}X^{\star}V_{t+1}^{-1}.
$$ 
Then, for small enough $\lambda >0 $, the matrix $(1-\lambda)I + \lambda \tilde{X}_{t+1} \in \mathcal{FW}^{n}_{\alpha,2}$. Hence, from \eqref{eq:Xhat}, we have 
$$\hat{X} = V^{\tr}_{t+1}((1-\lambda)I + \lambda \tilde{X}_{t+1})V_{t+1} \in \mathcal{FW}^{n}_{\alpha,2}(V_{t+1}).$$
This completes the proof.
\end{proof}

Our proof is inspired by \cite[Theorem 3.1]{ahmadi2017sum}, and we generalize it to any block partition $\alpha$, including the iterative algorithm based on SDD matrices \cite[Section 4]{ahmadi2017sum} as a special case. 
 The key idea is to make sure the optimal solution of the previous iteration is a feasible point in the next iteration. Thus, instead of the Cholesky decomposition, we can use other choices, such as spectral decomposition $X_t^{\star} = PDP^\tr$. 

Algorithm \ref{Algorithm:inner approximation} lists the overall procedure of the proposed iterative inner approximations for solving~\eqref{eq:SDPs}. We use a simple example to illustrate our algorithm. 

\begin{figure}[t]
    \centering
    \setlength{\abovecaptionskip}{2pt}
    \includegraphics[width=0.45\textwidth]{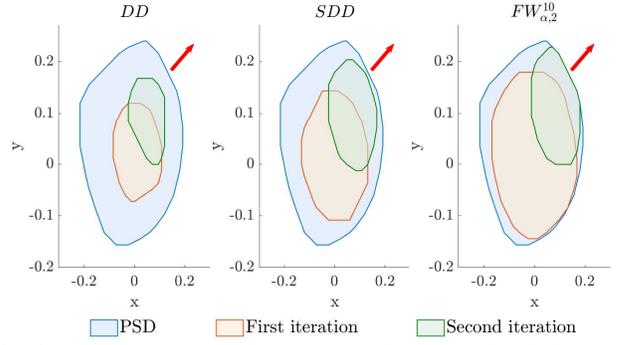}
    \caption{Feasible regions of inner approximations of  \eqref{eq:inner-example} in Algorithm \ref{Algorithm:inner approximation} using $DD$, $SDD$, and $\mathcal{FW}^{n}_{\alpha, 2}$ with $\alpha = \{2,2,2,2,2\}$. The red arrows denote the decreasing  direction of the cost value.   }
    \label{Fig:Inner_Approx}
\end{figure}

\begin{table}[t]
  \begin{center}
    \caption{Cost values of inner approximations  in Algorithm \ref{Algorithm:inner approximation} for solving \eqref{eq:inner-example}. We used $DD$, $SDD$, and $\mathcal{FW}^{n}_{\alpha, 2}$ with $\alpha = \{2,2,2,2,2\}$. The optimal cost of \eqref{eq:inner-example} is $-0.298$.}
    \label{table:inner}
    \small 
    \begin{tabular}{c p{9mm}p{8mm}p{9mm}p{8mm}p{9mm}p{8mm}}
    \toprule
    &\multicolumn{2}{c}{DD}&  \multicolumn{2}{c}{SDD} & \multicolumn{2}{c}{$\mathcal{FW}^n_{\alpha,2}$}  \\
    \cmidrule(lr){2-3}
    \cmidrule(lr){4-5}
    \cmidrule(lr){6-7}
    Iter & Cost & Gap & Cost & Gap & Cost & Gap\\
    \toprule
    $1$ & $-0.148$ & $50.3$\% & $-0.176$ & $40.9$\% & $-0.232$ & $22.2$\%\\
    $2$ & $-0.236$ & $20.8$\% & $-0.277$ & $7.04$\% & $-0.298$ & $0$\%\\
    \toprule
    \end{tabular}
  \end{center}
\end{table}

\begin{example}
Consider an SDP of the form
\begin{equation} \label{eq:inner-example}
    \begin{aligned}
        \min_{x,y} \quad & -x-y \\
        \mathrm{subject~to} \quad & I+xA+yB \succeq 0,
    \end{aligned}
\end{equation}
where $A$ and $B$ are two $10 \times 10$ matrices with each entry randomly generated. We consider inner approximations by DD, SDD and $\mathcal{FW}_{\alpha}^{10}$ with $\alpha=\{2,2,2,2,2\}$. The results are shown in Figure \ref{Fig:Inner_Approx}. 
The blue part in Figure \ref{Fig:Inner_Approx} shows the feasible region of \eqref{eq:inner-example}. We then replace the semidefinite constraint by $DD$, $SDD$ and $\mathcal{FW}^{10}_{\alpha, 2}$. 
Orange and green parts in Figure \ref{Fig:Inner_Approx} show the feasible regions in iteration 1 and 2 of Algorithm~\ref{Algorithm:inner approximation}. It is clear that the feasible region moves towards~the direction where the cost decreases. 
As shown in Table \ref{table:inner} (also in Figure \ref{Fig:Inner_Approx}),  our algorithm using $\mathcal{FW}^{n}_{\alpha, 2}$ achieves the optimal cost at the second iteration, while the results from DD/SDD approximations~\cite{ahmadi2017sum} are still far away from the optimal cost. 
\hfill $\square$
\end{example}



\section{Outer Approximations of the PSD cone} \label{sec:Outer}

The inner approximation in \eqref{eq:inner-approximation-step-0} provides an upper bound of the SDPs \eqref{eq:SDPs}. 
Here, we introduce an outer approximation for the same problem \eqref{eq:SDP-primal}, 
which provides a lower bound. Therefore, the optimal cost of \eqref{eq:SDPs} can be bounded above and below simultaneously. 

\subsection{Outer approximations} 
Consider the relationship $\mathcal{FW}^{n}_{\alpha,2} \subseteq \mathbb{S}^{n}_{+} \subseteq (\mathcal{FW}^{n}_{\alpha,2})^{*}$. Replacing PSD cone $\mathbb{S}^n_+$ by the dual cone $(\mathcal{FW}^{n}_{\alpha,2})^{*}$ gives an outer approximation of \eqref{eq:SDP-primal}, i.e.,
\begin{equation} \label{eq:outer-approximation-cone}
    \begin{aligned}
        {\mathsf L}_{\alpha}^1 \coloneqq \min_{X} \quad & \langle C, X\rangle \\
        \mathrm{subject~to} \quad & \langle A_i, X\rangle = b_i,\qquad i = 1, \ldots, m,  \\
        & X \in (\mathcal{FW}^{n}_{\alpha,2})^*.
    \end{aligned}
\end{equation}
We have 
$
    {\mathsf L}_{\alpha}^1 \leq d^\star = p^\star. 
$
Note that the dual cone $(\mathcal{FW}^{n}_{\alpha,2})^*$ admits a decomposition as \cite{zheng2022block}
\begin{equation}
    \begin{aligned}
        (\mathcal{FW}^{n}_{\alpha,2})^* = \Big\{X \in \mathbb{S}^{n} \mid &E_{kl}^\alpha  X (E_{kl}^\alpha)^\tr \in \mathbb{S}^{\alpha_k + \alpha_l}_+, \\
        & \qquad \qquad 1 \leq k < l \leq p \Big\}. 
        \end{aligned}
\end{equation}
Therefore, problem \eqref{eq:outer-approximation-cone} can be rewritten as:
\begin{equation} \label{eq:outer-approximation-step-0}
    \begin{aligned}
        {\mathsf L}_{\alpha}^1 = \min_{X} \quad & \langle C, X\rangle \\
        \mathrm{subject~to} \quad & \langle A_i, X\rangle = b_i,\qquad i = 1, \ldots, m,  \\
        & E_{kl}^\alpha  X (E_{kl}^\alpha)^\tr \in \mathbb{S}^{\alpha_k + \alpha_l}_+, 1 \leq k < l \leq p.
    \end{aligned}
\end{equation}

The gap $p^\star - {\mathsf L}_{\alpha}^1 $ might be large. 
Similar to inner approximations, we aim to solve a sequence of outer approximations in the following form 
%
    \begin{align}
        {\mathsf L}_{\alpha}^t \coloneqq \min_{X} \quad & \langle C, X\rangle \nonumber \\
        \mathrm{subject~to} \quad & \langle A_i, X\rangle = b_i,\qquad i = 1, \ldots, m,  \label{eq:outer-approximation-step-t} \\
        & E_{kl}^{\alpha}V_tXV_t^\tr (E_{kl}^\alpha)^\tr \in \mathbb{S}^{\alpha_k + \alpha_l}_+, 1 \leq k < l \leq p,\nonumber 
    \end{align}
which is parameterized by $V_t \in \mathbb{R}^{n \times n}$. 
However, we cannot generate the matrix $V_{t+1}$ by Cholesky decomposition of the optimal solution $X_{t}$ of \eqref{eq:outer-approximation-step-t}, since it is not positive semidefinite. To resolve this, motivated by~\cite{ahmadi2017sum}, we look into the dual problem of \eqref{eq:outer-approximation-step-t}, which is 
%
%
    \begin{align}
        {\mathsf L}_{\alpha}^t =  \max_{y,X_{kl}} \quad &  b^\tr y \nonumber \\
        \mathrm{subject~to} \quad & C-\sum_{i=1}^{m}y_iA_i =\sum_{1 \leq k<l \leq p} V_t^\tr (E_{kl}^{\alpha})^\tr X_{kl}E_{kl}^{\alpha} V_t, \nonumber\\
        & X_{kl} \in \mathbb{S}^{\alpha_k + \alpha_l}_{+}, \quad  1 \leq k < l \leq p. \label{eq:outer-approximation-step-t-dual}
    \end{align}
For the optimal solution $y^{t, \star}$ of \eqref{eq:outer-approximation-step-t-dual} at iteration $t$,~the matrix $C-\sum_{i=1}^{m}y_i^{t,\star}A_i$ is guaranteed to be positive semidefinite. Then, we choose a sequence of matrices $\{V_t\}$ for \eqref{eq:outer-approximation-step-t-dual} as
\begin{equation}  \label{eq:outer-approximation-Vt}
    \begin{split}
        V_1 & = I\\
        V_{t+1} & = \mathrm{chol}\left(C-\sum_{i=1}^{m}y_i^{t, \star}A_i\right), 
    \end{split}
\end{equation}
where $y^{t, \star}$ is the optimal solution of \eqref{eq:outer-approximation-step-t-dual} at iteration $t$.

\subsection{Monotonically increasing lower bounds}


The lower bounds from the sequence of outer approximations defined in \eqref{eq:outer-approximation-step-t-dual} and \eqref{eq:outer-approximation-Vt} are monotonically increasing, as proved in the following result. 

\begin{proposition} \label{prop:increasing}
    Given any partition $\alpha$, solving \eqref{eq:outer-approximation-step-t-dual} with matrices $\{V_{t}\}$ in \eqref{eq:outer-approximation-Vt} leads to 
    \vspace{-1.5mm}
    $${\mathsf L}^{1}_\alpha \leq {\mathsf L}^{2}_\alpha \leq \ldots \leq {\mathsf L}^{t}_\alpha \leq {\mathsf L}^{t+1}_\alpha \leq d^\star = p^{\star}.$$
\end{proposition}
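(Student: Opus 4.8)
The plan is to mirror the feasibility argument used for Proposition~\ref{prop:decreasing}, but now on the dual (maximization) side of the outer approximation. Since \eqref{eq:outer-approximation-step-t-dual} is a maximization problem, it suffices to show that the feasible set at iteration $t+1$ contains the optimal point of iteration $t$; this immediately yields ${\mathsf L}^{t}_\alpha \le {\mathsf L}^{t+1}_\alpha$. The final inequality ${\mathsf L}^{t+1}_\alpha \le d^\star = p^\star$ then follows from the outer-approximation property, i.e., every problem \eqref{eq:outer-approximation-step-t} relaxes the PSD cone to the larger cone $(\mathcal{FW}^{n}_{\alpha,2})^*$ (after the change of variables induced by $V_t$), and minimizing over a larger feasible set lower-bounds $p^\star$.

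First I would fix an optimal solution $(y^{t,\star}, X_{kl}^{t,\star})$ of \eqref{eq:outer-approximation-step-t-dual} and set $S_t := C - \sum_{i=1}^m y_i^{t,\star} A_i$. By the equality constraint, $S_t = \sum_{1\le k<l\le p} V_t^\tr (E_{kl}^\alpha)^\tr X_{kl}^{t,\star} E_{kl}^\alpha V_t$ is a sum of positive semidefinite terms, so $S_t \succeq 0$ and the Cholesky factorization $V_{t+1} = \mathrm{chol}(S_t)$ in \eqref{eq:outer-approximation-Vt} is well defined, with $S_t = V_{t+1}^\tr V_{t+1}$.

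Next I would show that $y^{t,\star}$ remains feasible at iteration $t+1$. Because $I \in \mathcal{FW}^n_{\alpha,2}$ (as already invoked in the proof of Theorem~\ref{theorem:strictly decreasing}), there exist $\bar X_{kl} \in \mathbb{S}^{\alpha_k+\alpha_l}_+$ with $I = \sum_{1\le k<l\le p} (E_{kl}^\alpha)^\tr \bar X_{kl} E_{kl}^\alpha$. Substituting this into $S_t = V_{t+1}^\tr I V_{t+1}$ gives
$$C - \sum_{i=1}^m y_i^{t,\star} A_i = S_t = \sum_{1\le k<l\le p} V_{t+1}^\tr (E_{kl}^\alpha)^\tr \bar X_{kl} E_{kl}^\alpha V_{t+1},$$
so $(y^{t,\star}, \{\bar X_{kl}\})$ satisfies the constraints of \eqref{eq:outer-approximation-step-t-dual} at iteration $t+1$. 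Consequently ${\mathsf L}^{t+1}_\alpha \ge b^\tr y^{t,\star} = {\mathsf L}^t_\alpha$, and chaining this over $t$ together with the lower-bound property ${\mathsf L}^t_\alpha \le d^\star$ completes the proof.

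I do not expect a genuine obstacle here: the only point requiring care is ensuring $S_t \succeq 0$ so that the Cholesky step is valid, which is exactly why the algorithm operates on the dual \eqref{eq:outer-approximation-step-t-dual} rather than the primal \eqref{eq:outer-approximation-step-t} (whose optimal $X$ need not be PSD). The construction deliberately guarantees $S_t = V_{t+1}^\tr I V_{t+1}$, which is the dual counterpart of the relation $X_t^{\star} = V_{t+1}^\tr I V_{t+1}$ exploited in Proposition~\ref{prop:decreasing}.
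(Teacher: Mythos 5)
Your proof is correct and follows essentially the same route as the paper's: both show that $y^{t,\star}$ stays feasible at iteration $t+1$ because $C-\sum_{i=1}^{m}y_i^{t,\star}A_i = V_{t+1}^\tr I V_{t+1}$ and $I \in \mathcal{FW}^n_{\alpha,2}$, the only cosmetic difference being that the paper phrases this via the cone $\mathcal{FW}^n_{\alpha,2}(V_{t+1})$ in the reformulation \eqref{eq:outer-approximation-step-t-dual-s1} while you exhibit the explicit certificates $\bar X_{kl}$. Your added remark that $S_t \succeq 0$ (as a sum of PSD terms) makes the Cholesky step well defined is a point the paper states only in the surrounding text, not in the proof itself.
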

\begin{proof}
    It is not difficult to see that the dual problem \eqref{eq:outer-approximation-step-t-dual} is equivalent to
    \begin{align}
         \max_{y} \quad &  b^\tr y \nonumber \\
        \mathrm{subject~to} \quad & C-\sum_{i=1}^{m}y_iA_i  \in  \mathcal{FW}^n_{\alpha,2}(V_t).  \label{eq:outer-approximation-step-t-dual-s1}
    \end{align}
    Suppose the optimal solution to \eqref{eq:outer-approximation-step-t-dual-s1} at iteration $t$ is $y_i^{t, \star}$.~Let 
    \[
    Z_t^\star =C-\sum_{i=1}^{m}y_i^{t, \star}A_i, \quad 
    V_{t+1} =  \mathrm{chol} (Z_t^\star).
    \] 
    %
    %
    %
    We have $Z_{t}^\star = V_{t+1}^\tr \times I \times  V_{t+1} \in \mathcal{FW}^n_{\alpha,2}(V_{t+1})$ since $I \in \mathcal{FW}^n_{\alpha,2}$. It means that $y_i^{t,\star}$ is feasible to \eqref{eq:outer-approximation-step-t-dual-s1} at iteration $t+1$. Thus, we have ${\mathsf L}^{t}_\alpha \leq {\mathsf L}^{t+1}_\alpha$. 
\end{proof}

Similar to Theorem \ref{theorem:strictly decreasing}, when $C-\sum_{i=1}^{m}y_i^{t,\star}A_i$ is strictly positive definite, we have a strictly increasing cost, as summarized in the following theorem. 
\begin{theorem} \label{theorem:strictly increasing}
    Given any partition $\alpha$, let $\{y_{i}^{t,\star},X_{kl}^{t,\star}\}$ be an optimal solution of \eqref{eq:outer-approximation-step-t-dual} at iterate $t$. If $C-\sum_{i=1}^{m}y_i^{t,\star}A_i$ is strictly positive definite and  ${\mathsf L}^{t}_\alpha < d^\star$, then ${\mathsf L}^{t}_\alpha < {\mathsf L}^{t+1}_\alpha \leq d^\star$.
\end{theorem}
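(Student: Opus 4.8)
The plan is to mirror the argument of Theorem \ref{theorem:strictly decreasing}, but carried out on the dual formulation \eqref{eq:outer-approximation-step-t-dual-s1} rather than on the primal, since that is where a positive definite slack is available. Let $y^\star$ denote an optimal solution of the original dual SDP \eqref{eq:SDP-dual}, so that $b^\tr y^\star = d^\star$ and the slack $Z^\star := C - \sum_{i=1}^m y_i^\star A_i$ is positive semidefinite. Let $y^{t,\star}$ be the given optimal solution of \eqref{eq:outer-approximation-step-t-dual} at iteration $t$, with $b^\tr y^{t,\star} = {\mathsf L}^t_\alpha$. The goal is to exhibit a single point that is feasible at iteration $t+1$ and whose objective value strictly exceeds ${\mathsf L}^t_\alpha$.

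First I would form the convex combination $\hat{y} := (1-\lambda)y^{t,\star} + \lambda y^\star$ with $\lambda \in (0,1)$. Because the objective is linear, $b^\tr \hat{y} = (1-\lambda){\mathsf L}^t_\alpha + \lambda d^\star$, which is strictly larger than ${\mathsf L}^t_\alpha$ for every $\lambda > 0$ under the hypothesis ${\mathsf L}^t_\alpha < d^\star$. It then remains only to verify that $\hat{y}$ satisfies the conic constraint of iteration $t+1$ in \eqref{eq:outer-approximation-step-t-dual-s1}, i.e. $C - \sum_i \hat{y}_i A_i \in \mathcal{FW}^n_{\alpha,2}(V_{t+1})$.

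The key computation is that the affine map $y \mapsto C - \sum_i y_i A_i$ turns the slack of $\hat{y}$ into a convex combination of slacks, $C - \sum_i \hat{y}_i A_i = (1-\lambda)Z_t^\star + \lambda Z^\star$, where $Z_t^\star := C - \sum_i y_i^{t,\star}A_i$. By the choice \eqref{eq:outer-approximation-Vt}, $Z_t^\star = V_{t+1}^\tr V_{t+1}$, and since $Z_t^\star$ is strictly positive definite by assumption, $V_{t+1}$ is invertible. Conjugating by $V_{t+1}^{-1}$ gives $C - \sum_i \hat{y}_i A_i = V_{t+1}^\tr \big((1-\lambda)I + \lambda \tilde{Z}\big) V_{t+1}$, where $\tilde{Z} := (V_{t+1}^{-1})^\tr Z^\star V_{t+1}^{-1}$ is a fixed symmetric matrix. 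Thus membership in $\mathcal{FW}^n_{\alpha,2}(V_{t+1})$ reduces to showing $(1-\lambda)I + \lambda \tilde{Z} \in \mathcal{FW}^n_{\alpha,2}$.

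The main obstacle, and the only place where topology enters, is this last membership; I would resolve it exactly as in the inner case. The identity $I$ lies in the interior of $\mathcal{FW}^n_{\alpha,2}$, because $I$ is strictly diagonally dominant and hence in the interior of $DD_n \subseteq SDD_n \subseteq \mathcal{FW}^n_{\alpha,2}$, so a point interior to the smaller cone is interior to the larger one. Consequently the perturbation $\lambda(\tilde{Z}-I)$ keeps $(1-\lambda)I + \lambda\tilde{Z}$ inside $\mathcal{FW}^n_{\alpha,2}$ for all sufficiently small $\lambda > 0$. Fixing such a $\lambda$ makes $\hat{y}$ feasible at iteration $t+1$, so that ${\mathsf L}^{t+1}_\alpha \geq b^\tr \hat{y} > {\mathsf L}^t_\alpha$, while the bound ${\mathsf L}^{t+1}_\alpha \leq d^\star$ is already supplied by Proposition \ref{prop:increasing}.
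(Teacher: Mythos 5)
Your proof is correct and follows essentially the same route as the paper's: form the convex combination $\hat{y}=(1-\lambda)y^{t,\star}+\lambda y^{\star}$, use linearity of the slack map $y \mapsto C-\sum_i y_i A_i$ and invertibility of $V_{t+1}$ (from strict positive definiteness of $Z_t^{\star}=V_{t+1}^\tr V_{t+1}$) to reduce feasibility at iteration $t+1$ to the membership $(1-\lambda)I+\lambda \tilde{Z}\in\mathcal{FW}^n_{\alpha,2}$, and then take $\lambda$ small. You even supply a justification the paper leaves implicit, namely that this membership holds for small $\lambda>0$ because $I$ lies in the interior of $\mathcal{FW}^n_{\alpha,2}$ (being strictly diagonally dominant, hence interior to $DD_n\subseteq SDD_n\subseteq\mathcal{FW}^n_{\alpha,2}$), which is a welcome bit of extra rigor.
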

\begin{proof}
    Let $({y^\star,Z^\star})$ be the optimal solution of \eqref{eq:SDP-dual}, and $y_i^{t, \star}$ is an optimal solution of \eqref{eq:outer-approximation-step-t-dual-s1} at iteration $t$. 
    %
    %
    We construct 
\begin{equation} \label{eq:yhat}
    \hat{y} \coloneqq (1-\lambda)y^{t,\star}+\lambda y^\star,
\end{equation}
    with some $\lambda \in (0,1)$. 
    %
    %
    We will prove there exists a $\lambda \in (0,1)$ such that $ \hat{y}$ in \eqref{eq:yhat} is feasible for \eqref{eq:outer-approximation-step-t-dual-s1} at iteration $t + 1$. 
    We then complete the proof by observing  
    $${\mathsf L}^{t+1}_{\alpha} = b^\tr y^{t+1, \star} \geq b^\tr \hat{y} =(1-\lambda)b^\tr y^{t,\star}+\lambda b^\tr y^\star > {\mathsf L}^{t}_{\alpha}.
    $$

    To prove $ \hat{y}$ is feasible at iteration $t+1$, we need to show 
    \begin{equation} \label{eq:FW-t+1-y}
        C-\sum_{i=1}^{m}\hat{y}_iA_i  \in  \mathcal{FW}^n_{\alpha,2}(V_{t+1}), \;\text{for some}\; \lambda \in (0,1).
    \end{equation}
    %
    Indeed, we have 
    $$
        \begin{aligned}
            &C-\sum_{i=1}^{m}\hat{y}_iA_i \\
            =& (1-\lambda) \left(C- \sum_{i=1}^{m} y^{t,\star}_i A_i\right) + \lambda \left(C- \sum_{i=1}^{m} y^{\star}_i A_i\right).
        \end{aligned}
    $$
    %
    %
    %
    %
    Since $C- \sum_{i=1}^{m} y^{t,\star}_i A_i=V_{t+1}^\tr V_{t+1}$ that is strictly positive definite, $V_{t+1}$ must be invertible. Then, we have 
    $$
    \begin{aligned}
    &\left(V_{t+1}^{-1}\right)^{\tr}\left(C-\sum_{i=1}^{m}\hat{y}_iA_i\right)V_{t+1}^{-1} \\
    = &(1-\lambda) I + \lambda \left(V_{t+1}^{-1}\right)^{\tr}\left(C- \sum_{i=1}^{m} y^{\star}_i A_i\right)V_{t+1}^{-1} \in \mathcal{FW}^n_\alpha
    \end{aligned}
    $$
    when $\lambda \in (0,1)$ is small enough. It means that \eqref{eq:FW-t+1-y} holds with this $\lambda$ for $ \hat{y}$ in \eqref{eq:yhat}. 
    This completes the proof.
\end{proof}

\begin{algorithm}[t]
\caption{Outer-approximations using~$\mathcal{FW}^n_{\alpha,2}$}

\SetKwData{Left}{left}\SetKwData{This}{this}\SetKwData{Up}{up}
\SetKwFunction{Union}{Union}\SetKwFunction{FindCompress}{FindCompress}

\KwIn{SDP data $A_i,C \in \mathbb{S}^n, b\in \mathbb{R}^m$, block partition $\alpha$, and maximum iteration $t_{\max}$ }
\KwOut{Lower bound ${\mathsf L}_{\alpha}$}
Initialize $t=1; V_{1} = I;$ \\
    \While{$t<t_{\max}$ }
    {
    Solve $\eqref{eq:outer-approximation-step-t-dual}$ to get ${\mathsf L}_{\alpha}^t$ and $y^{t, \star};$
    \\Set ${\mathsf L}_{\alpha}={\mathsf L}_{\alpha}^t;$\\
    Compute $V_{t+1} = \mathrm{chol}(C-\sum_{i=1}^{m}y_i^{t, \star}A_i);$\\
    Set $t=t+1;\;$
    }
    \Return {${\mathsf L}_{\alpha}$ }
\label{Algorithm:outer approximation}
\end{algorithm}

\begin{remark}[Solving outer approximations]
Unlike the inner approximations \eqref{eq:inner-approximation-step-k}, the outer approximations \eqref{eq:outer-approximation-step-t} and \eqref{eq:outer-approximation-step-t-dual} are not in the standard form of SDPs. Thus they cannot be solved directly using standard conic solvers. In our implementation, we apply the idea in~\cite{lofberg2009dualize} and transform \eqref{eq:outer-approximation-step-t-dual} into the following primal form of SDPs 
\begin{align}
       \min_{y,X_{kl}} \; &  - b^\tr y \nonumber \\
        \mathrm{subject~to} \; & \sum_{1 \leq k<l \leq p} V_t^\tr (E_{kl}^{\alpha})^\tr X_{kl}E_{kl}^{\alpha} V_t +\sum_{i=1}^{m}y_iA_i = C, \nonumber\\
        & X_{kl} \in \mathbb{S}^{\alpha_k + \alpha_l}_{+},  1 \leq k < l \leq p, \label{eq:outer-approximation-step-t-dual-reform}
    \end{align}
which is ready to be solved using standard conic solvers. We note that the size of PSD constraints has been reduced in \eqref{eq:outer-approximation-step-t-dual-s1}, but the number of equality constraints is $n^2$. Thus, solving \eqref{eq:outer-approximation-step-t-dual-s1} might not be as efficient as solving \eqref{eq:inner-approximation-step-k}.  \hfill $\square$ 
\end{remark}

Our iterative outer approximations for solving \eqref{eq:SDPs} is listed in Algorithm \ref{Algorithm:outer approximation}. 
We use SDP \eqref{eq:inner-example} to illustrate our algorithm. 

\begin{example}



The feasible regions in the first and $11$th~iterations are shown in Figure \ref{Fig:Outer_Approx}. 
In particular, the blue part  shows the feasible region of \eqref{eq:inner-example}. We then replace the PSD constraint by $(DD)^*$, $(SDD)^*$ and $(\mathcal{FW}^{10}_{\alpha, 2})^*$. Orange and green regions are the feasible regions in iterations~1~and~11. It is clear that the feasible region moves towards the direction where the cost increases. 
As shown in  Table \ref{table:outer} (also in Figure \ref{Fig:Outer_Approx}),  our algorithm using $\mathcal{FW}^{n}_{\alpha, 2}$ achieves the optimal cost at iteration 11, while the results from DD/SDD approximations~\cite{ahmadi2017sum} are still far away from the optimal cost.  
\hfill $\square$
\end{example} 

Figure \ref{fig: inner/outer-approximation-simple-example-plot} shows the convergence of the upper and lower bounds of SDP \eqref{eq:inner-example} from Algorithm \ref{Algorithm:inner approximation} and \ref{Algorithm:outer approximation}. In this case, the convergence using $\mathcal{FW}^{n}_{\alpha, 2}$ is much faster than the DD/SDD strategies \cite{ahmadi2019dsos,ahmadi2017sum}. 

\begin{figure}
    \centering
     \setlength{\abovecaptionskip}{1pt}
        \includegraphics[width=0.45\textwidth]{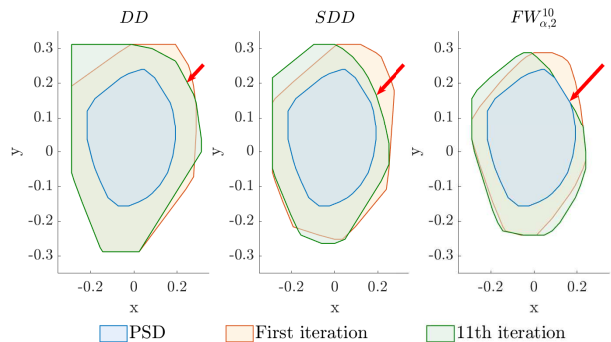}
    \caption{Feasible regions of outer approximations in Algorithm~\ref{Algorithm:outer approximation} by $DD$, $SDD$, and $\mathcal{FW}^{n}_{\alpha, 2}$ with $\alpha=\{2,2,2,2,2\}$. The red arrow denotes the increasing direction of the cost value.   }
    \label{Fig:Outer_Approx}
\end{figure}



\begin{table}[t]
  \begin{center}
    \caption{Cost values of iteratively outer approximation  \eqref{eq:inner-example} using $DD$, $SDD$, and $\mathcal{FW}^{n}_{\alpha, 2}$. The optimal cost value of \eqref{eq:inner-example} is $-0.298$.}
    \label{table:outer}
    \small
    \begin{tabular}{c p{9mm}p{8mm}p{9mm}p{8mm}p{9mm}p{8mm}}
    
    \toprule
    &\multicolumn{2}{c}{DD}&  \multicolumn{2}{c}{SDD} & \multicolumn{2}{c}{$\mathcal{FW}^n_{\alpha,2}$}  \\
    \cmidrule(lr){2-3}
    \cmidrule(lr){4-5}
    \cmidrule(lr){6-7}
    Iter & Cost & Gap & Cost & Gap & Cost & Gap\\
    \toprule
    $1$ & $-0.499$ &$67.5$\% & $-0.469$ &$57.4$\% & $-0.401$ & $34.6$\% \\
    $11$ & $-0.443$ &$48.7$\% & $-0.350$ &$17.6$\% & $-0.298$ & $0$\%\\
    \toprule
    \end{tabular}
  \end{center}
\end{table}

\begin{figure}[t] 
    \centering
    \includegraphics[width=0.32\textwidth]{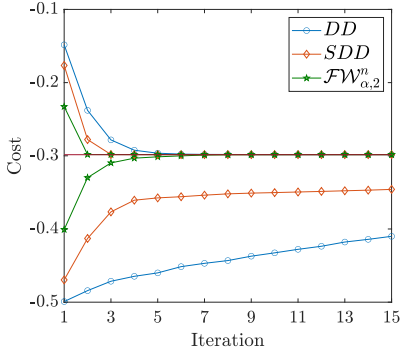}
        \caption{Inner/Outer approximations of SDP \eqref{eq:inner-example} using DD, SDD, and $\mathcal{FW}^n_{2,\alpha}$ with $\alpha=\{2,\ldots,2\}$.}
    \label{fig: inner/outer-approximation-simple-example-plot}
\end{figure}

\begin{remark}[Role of partition $\alpha$] \label{remark:partition}
In our Algorithms \ref{Algorithm:inner approximation}-\ref{Algorithm:outer approximation}, the choice of partition $\alpha$ brings  flexibility in balancing the computational efficiency and solution quality at each iteration. Choosing a suitable partition might be problem dependent; we refer interested readers to \cite{zheng2022block} for more discussions. Here, we highlight that 1) a coarser partition normally leads to faster convergence in Algorithms \ref{Algorithm:inner approximation}-\ref{Algorithm:outer approximation}, as shown in our extensive numerical experiments in Section \ref{section:Numerical Results}; 2)  a coarser partition also leads to a smaller number $p$ in \eqref{eq:outer-approximation-step-t-dual} and \eqref{eq:inner-approximation-step-k}. The latter fact is important in constructing the problem in each iteration, especially for large-scale cases. For example, when $n = 2000$, if we use the SDD matrices for inner/outer approximations \cite{ahmadi2019dsos,ahmadi2017sum}, the number of small blocks is $\binom{2000}{2} = 1\,999\,000$ which is too large to even construct the problem instances \eqref{eq:outer-approximation-step-t-dual} and \eqref{eq:inner-approximation-step-k}. We indeed failed to construct such problems in our experiments in Section~\ref{subsection:random-SDP}. Instead, for $\alpha = \{10,\ldots,10\}$ ($\beta = \{20,\ldots,20\}$, respectively), the number of blocks is reduced to $ \binom{200}{2}=19900$ ($\binom{100}{2}=4950$, respectively), for which efficient constructions exist.


\end{remark}

\section{Numerical Results} \label{section:Numerical Results}
In this section, we present computational results of Algorithms \ref{Algorithm:inner approximation}-\ref{Algorithm:outer approximation} on three classes of SDPs: 1) independent stable set, 2) binary quadratic optimization, and 3) SDPs with random data.
Our experiments were carried out  in MATLAB R2021a on a Windows PC with 2.6 GHz speed and 24 GB RAM. All the SDP instances at each iteration of Algorithms \ref{Algorithm:inner approximation}-\ref{Algorithm:outer approximation} were solved by MOSEK \cite{aps2019mosek}.

\begin{figure}[t] 
    \centering
    \setlength{\abovecaptionskip}{1pt}
    \includegraphics[width=0.32\textwidth]{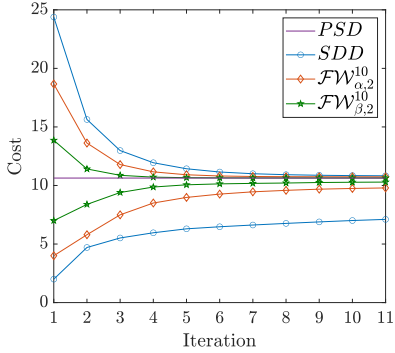}
        \caption{Inner/Outer approximations of Lov$\acute{a}$sz theta number \eqref{eq:stableset_sdp} by different partitions: $\alpha=\{2,\ldots,2\}$, and $\beta=\{5,\ldots,5\}$.}
    \label{Fig:stableset}
\end{figure}

\begin{table}[h]
  \begin{center}
  \setlength{\abovecaptionskip}{1pt}
    \caption{Success rate of upper bounds of $\vartheta(\mathcal{G})$ in \eqref{eq:stableset_sdp} for $140$ instances of 30-nodes Erd$\ddot{o}$s-$\acute{R}$enyi graphs using Algorithm \ref{Algorithm:outer approximation}, where $\alpha=\{2,\ldots,2\}$ and $\beta=\{5,\ldots,5\}$. }
    \label{table:stableset}
    \small
    \begin{tabular}{c r r r}
    \toprule
    Iteration $t$ & SDD & $\mathcal{FW}^n_{\alpha,2}$ & $\mathcal{FW}^n_{\beta,2}$ \\
    \toprule
    $1$ & $0 \%$ & $0 \%$ & $0 \%$ \\
    $3$ & $0 \%$ & $5.7 \%$ & $36.4 \%$  \\
    $5$ & $20 \%$ & $36.4 \%$ & $86.4 \%$  \\
    $7$ & $35.7 \%$ & $79.3 \%$ & $95.7 \%$  \\
    \toprule    
  
    \end{tabular}
  \end{center}
\end{table}

\subsection{The maximum stable set problem}
The maximum stable set problem is a classical combinatorial problem, which aims to find the stability number of a graph.  A \textit{stable set} of a undirected graph $\mathcal{G}=(\mathcal{V},\mathcal{E})$ is a set of nodes of $\mathcal{G}$ such that there are no edges between them. The maximum stable number of $\mathcal{G}$, denoted as $\alpha(\mathcal{G})$, is the size of maximum stable set. 
However, testing whether a $\alpha(\mathcal{G})$ is greater than an integer $k$ is well-known to be NP-complete~\cite{karp1972reducibility}. 
%
%
This problem can be formulated as 
\begin{equation} \label{eq:stabelset}
    \begin{aligned}
        \alpha(\mathcal{G}) \coloneqq  \max_{x_i} \quad &  \sum_{i=1}^{n}x_i \\
        \mathrm{subject~to} \quad & x_i \in \{0,1\}, \quad i = 1,2,\ldots,n,\\
        & x_ix_j = 0, \quad \forall(i,j)\in \mathcal{E}.
    \end{aligned}
\end{equation}

A well-known SDP-based upper bound, introduced in \cite{lovasz1979shannon}, can be computed by
\begin{equation}\label{eq:stableset_sdp}
    \begin{aligned}
        \vartheta(\mathcal{G}) \coloneqq  \max_{X} \quad &  \langle J,X \rangle\\
        \mathrm{subject~to} \quad & \langle I,X\rangle = 1,\\
        & X_{ij}=0, \quad \forall(i,j)\in \mathcal{E},\\
        & X \succeq 0,\\
    \end{aligned}
\end{equation}
%
%
%
where $J$ is an all-one matrix and $I$ is the identity matrix. The cost of \eqref{eq:stableset_sdp} is called Lov$\acute{a}$sz theta number, denoted as  $\vartheta(\mathcal{G})$, which provides an upper bound $\vartheta(\mathcal{G})\geq\alpha(\mathcal{G})$. We now apply Algorithms \ref{Algorithm:inner approximation} and \ref{Algorithm:outer approximation} to get a sequence of upper and lower bounds on Lov$\acute{a}$sz theta number.

We first generated a Erd$\ddot{o}$s-$\acute{R}$enyi graph of 30 nodes with edge probability 0.2, and then applied Algorithms \ref{Algorithm:inner approximation} and \ref{Algorithm:outer approximation} using three different partitions: SDD (trivial partition), $\alpha=\{2,\ldots,2\}$, and $\beta=\{5,\ldots,5\}$. As shown in Figure \ref{Fig:stableset}, a coarser partition $\beta$ leads to the fastest convergence for both inner and outer approximation in this case. 
%
To give a more quantitative comparison, we further generated 140 instances of 30-node Erd$\ddot{o}$s-$\acute{R}$enyi graphs with edge probability from 0.2 to 0.8. We use Algorithm \ref{Algorithm:outer approximation} to compute the upper bound of $\vartheta(\mathcal{G})$. When the upper bound is within 99\% suboptimality to $\vartheta(\mathcal{G})$, we consider it as a success. Table \ref{table:stableset} lists the success rate at different iterations of Algorithm \ref{Algorithm:outer approximation}. As expected, a coarser partition $\beta$ gives much higher success rates compared to SDD approximation~\cite{ahmadi2017sum}. Specifically, in the seventh iteration, $\mathcal{FW}^{n}_{\beta,2}$ obtains $95.7\%$ success rate, while SDD only has $35.7\%$ success rate.


\begin{figure}[t]
    \centering
    \setlength{\abovecaptionskip}{1pt}
    \includegraphics[width=0.32\textwidth]{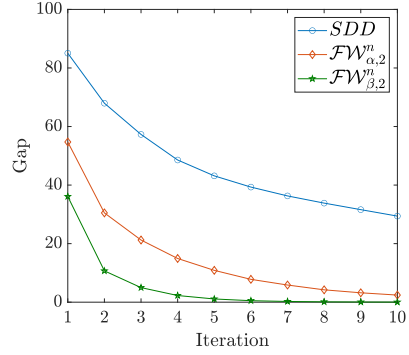}
        \caption{Inner approximation on the binary quadratic optimization \eqref{eq:BQO-sdp-relax} with $\alpha=\{10,\ldots,10\}$ and $\beta=\{20,\ldots,20\}$.}
        \label{Fig:BQO}
\end{figure}

\begin{table}[t]
  \begin{center}
  \setlength{\abovecaptionskip}{1pt}
    \caption{Upper bounds of \eqref{eq:BQO-sdp-relax} at iteration 10 using SDD, $\mathcal{FW}^n_{\alpha,2}$, $\mathcal{FW}^n_{\beta,2}$ with $\alpha=\{10,\ldots,10\},~\beta=\{20,\ldots,20\}$.}
    \label{table:BQO}
    \small
    \begin{tabular}{c*{6}{l}}
    \toprule
    & \multicolumn{2}{c}{SDD} & \multicolumn{2}{c}{$\mathcal{FW}^n_{\alpha,2}$} &\multicolumn{2}{c}{$\mathcal{FW}^n_{\beta,2}$}\\
    \cmidrule(lr){2-3}\cmidrule(lr){4-5}\cmidrule(lr){6-7}
    Instance&Gap&Time& Gap& Time& Gap &Time\\
    \toprule
    1 & $28.77$ &$43.5$ & $3.02$ &$27.0$ & $0.05$& $32.4$\\
    2 & $30.81$ &$51.4$ & $2.31$ &$25.6$ &$0.03$&$30.4$\\
    3 & $29.24$ & $48.4$& $3.54$ &$27.5$ &$0.04$&$29.2$\\
    4 & $29.45$ &$47.2$ & $3.14$ &$26.5$ &$0.04$&$32.7$\\
    5 & $29.40$ & $55.8$& $2.43$ &$26.7$ &$0.03$&$36.9$\\
    \toprule
    \end{tabular}
  \end{center}
\end{table}

\subsection{Binary quadratic optimization}
Binary quadratic optimization is another classical combinatorial problem, in which we have a quadratic cost and a binary decision variable $x$. Formally, the problem is 
\begin{equation} \label{eq:BQO}
    \begin{aligned}
        \min_{x} \quad & x^TQx \\
        \mathrm{subject~to} \quad & x_i^2=1,\quad i = 1,2,\ldots,n,
    \end{aligned}
\end{equation}
where $Q \in \mathbb{S}^{n}$. Many well-known problems are in the form of \eqref{eq:BQO}, such as max-cut problems \cite{festa2002randomized}. 
%
%
The quadratic constraint define a finite set. The number of elements grows with the rate $2^n$. It is well-known that such a problem is NP-complete~\cite{hartmanis1982computers}. 
A standard semidefinite relaxation is  


\begin{equation}
    \begin{aligned}
        \min_{X} \quad & \langle Q, X \rangle \\
        \mathrm{subject~to} \quad &  X_{ii} = 1, \quad i =1,2,\ldots,n, \\ 
        & X \in \mathbb{S}^{n}_+,
    \end{aligned} \label{eq:BQO-sdp-relax}
\end{equation}
which returns a lower bound for \eqref{eq:BQO}. 

We apply Algorithm \ref{Algorithm:inner approximation} for inner approximations  of~\eqref{eq:BQO-sdp-relax}.
We generated five random cost matrices $Q \in \mathbb{S}^{100}$. We apply three different partitions:
SDD, $\alpha = \{10,\ldots,10\}$, and $\beta = \{20,\ldots,20\}$. We set the maximum iteration as $10$. The results are shown in Figure \ref{Fig:BQO} and Table \ref{table:BQO}. As expected again, as the partition becomes coarser, the approximation quality increases. For example, inner approximation by partition $\beta$ obtains almost optimal solution ($\geq 99.9\%$ optimality) within $10$ iterations, while SDD can only achieve around $70 \%$ optimality while taking a longer time (the longer time consumption is related to the large number of small blocks; see Remark \ref{remark:partition}). 


\subsection{Random SDPs} \label{subsection:random-SDP}
Our final experiment is to show the scalability of the inner approximations in Algorithm \ref{Algorithm:inner approximation}. We generated seven random large-scale SDPs with PSD constraints of $1500$, $2000$, $2500$, $3000$, $3500$, $4000$, and $4500$. The number of linear constraints is fixed as $m = 10$. We approximate the PSD cone using two different partitions
$\alpha = \{10,\ldots,10\}$ and $\beta = \{20,\ldots,20\}$. As discussed in Remark \ref{remark:partition}, we failed to use SDD approximation in this large-scale experiment. 

We ran Algorithm \ref{Algorithm:inner approximation} for 30 minutes and then compare the solution quality. The optimality gap is computed by $\lvert \frac{p^{\star}-f_{30}}{p^{\star}} \rvert \times 100 \% $, where $p^\star$ is the optimal cost value of original SDP, and $f_{30}$ is the obtained upper bound after running $30$ minutes. The results are listed in Table \ref{Table:randomSDP}.  Our proposed method shows promising efficiency and accuracy. For example, when $n=4500$ and $m=10$, Algorithm \ref{Algorithm:inner approximation} with partition $\beta$ obtained a solution with $99.9\%$ optimality in $30$ minutes, while original SDP took over 4.5 hours to solve. 


\begin{table}[t]
  \begin{center}
  \setlength{\abovecaptionskip}{1pt}
    \caption{Computational results of 6 different large-scale SDPs using Algorithm \ref{Algorithm:inner approximation} with $\alpha=\{10,\ldots,10\}$ and $\beta=\{20,\ldots,20\}$. $f_1$ denotes the cost value of the first iteration. $f_{30}$ denotes the cost value after $30$ minutes. The time consumption  (in seconds) for solving the original SDP is listed in the last column. }
    \small
    \begin{tabular}{p{6mm}p{7mm}p{7mm}p{6mm}p{7mm}p{7mm}p{6mm}p{6.5mm}}
    \toprule
    & \multicolumn{3}{c}{$\mathcal{FW}^n_{\alpha,2}$} & \multicolumn{3}{c}{$\mathcal{FW}^n_{\beta,2}$} & \multicolumn{1}{c}{PSD} \\
    \cmidrule(lr){2-4} \cmidrule(lr){5-7} \cmidrule(lr){8-8}
    $n$&$f_1$&$f_{30}$&Gap&$f_1$&$f_{30}$&Gap&  Time\\
    \toprule
    $1500$  & $5.63e6$& $4.76e6$ & $0.03$& $5.20e6$& $4.76e6$ & $0.03$& $603$ \\
    $2000$  & $3.33e6$ & $2.86e6$ & $0.10$ & $3.09e6$ & $2.86e6$ & $0.05$&  $1\,201$  \\
    $2500$  & $6.11e6$ & $5.29e6$ & $0.07$ &$5.70e6$ & $5.29e6$ & $0.05$& $2\,893$  \\
    $3000$  & $1.81e7$ &$1.32e7$ & $0.79$ &$1.57e7$ &$1.32e7$ & $0.79$&  $5\,508$  \\
    $3500$  & $8.96e6$&$7.08e6$ & $0.10$ &$8.02e6$&$7.07e6$ & $0.08$& $7\,369$\\
    $4000$  & $9.52e6$&$6.89e6$ & $0.15$ &$8.21e6$&$6.89e6$ & $0.11$& $10\,689$\\
    $4500$  & $2.05e7$& $1.70e7$ & $0.08$ &$1.88e7$& $1.69e7$ & $0.06$&  $16\,989$ \\
    \toprule
    \end{tabular}
    \label{Table:randomSDP}
  \end{center}
\end{table}

\balance 
\section{Conclusions} \label{section:Conclution}
In this paper, we have introduced the iterative inner/outer approximations for solving SDPs (cf. Algorithm \ref{Algorithm:inner approximation}-\ref{Algorithm:outer approximation}), and analyzed their solution quality (cf. Propositions \ref{prop:decreasing}-\ref{prop:increasing} and Theorems \ref{theorem:strictly decreasing}-\ref{theorem:strictly increasing}). 
Numerical results on stable set, binary quadratic optimization, and random SDPs have shown promising accuracy and computational scalability when proper partitions were used.  
Future work includes analyzing the convergence of (or modified) Algorithm \ref{Algorithm:inner approximation}-\ref{Algorithm:outer approximation} (some recent results appeared in \cite{roig2022globally}). Developing other types of iterative algorithms based on block factor-width matrices will also be interesting. 

\addtolength{\textheight}{-12cm}   






\bibliographystyle{IEEEtran}
\bibliography{references}

\end{document}